\newtheorem{lemma}{Lemma}[section]
\newtheorem{theorem}{Theorem}[section]
\renewcommand{\ge}{\geqslant}
\renewcommand{\le}{\leqslant}
\newcommand{\e}{\textrm e}
\renewcommand{\i}{\textrm i}
\newcommand{\tms}{\!\cdot\!}
\DeclareSymbolFont{eulargesymbols}{U}{zeuex}{m}{n}
\DeclareMathSymbol{\intop}{\mathop}{eulargesymbols}{"52}
\DeclareMathSymbol{\ointop}{\mathop}{eulargesymbols}{"49}
\begin{document}

\title{\LARGE\bf On the extreme eigenvalues and asymptotic conditioning of a class of Toeplitz matrix-sequences arising from fractional problems}

\author{M. Bogoya$^\dagger$\footnote{Dipartimento di Scienza ed Alta Tecnologia, Universit\`a dell'Insubria, Via Valleggio 11, 22100 Como (ITALY). \newline \hspace*{3.2mm}$^\times$Departamento de Matem\'aticas, CINVESTAV, M\'exico D.F. (M\'exico).\newline $^\dagger$johanmanuel.bogoya@uninsubria.it, $^\ddagger$grudsky@math.cinvestav.mx, $^\wr$mariarosa.mazza@uninsubria.it, $^\diamond$s.serracapizzano@uninsubria.it}
\and
S.M. Grudsky$^{\ddagger\times}$
\and
M. Mazza$^{\wr\ast}$
\and
S. Serra-Capizzano$^{\diamond\ast}$}
\maketitle
\date{}

\begin{abstract}
The analysis of the spectral features of a Toeplitz matrix-sequence $\left\{T_{n}(f)\right\}_{n\in\mathbb N}$, generated by a symbol $f\in L^1([-\pi,\pi])$, real-valued almost everywhere (a.e.), has been provided in great detail in the last century, as well as the study of the conditioning, when $f$ is nonnegative a.e. Here we consider a novel type of problem arising in the numerical approximation of distributed-order fractional differential equations (FDEs), where the matrices under consideration take the form
\[
\mathcal{T}_{n}=c_0T_{n}(f_0)+c_{1} h^h T_{n}(f_{1})+c_{2} h^{2h} T_{n}(f_{2})+\cdots+c_{n-1} h^{(n-1)h}T_{n}(f_{n-1}),
\]
$c_0,c_{1},\ldots, c_{n-1} \in [c_*,c^*]$, $c^*\ge c_*>0$, independent of $n$, $h=\frac{1}{n}$, $f_j\sim g_j$, $g_j=|\theta|^{2-jh}$, $j=0,\ldots,n-1$.
Since the resulting generating function depends on $n$, the standard theory cannot be applied and the analysis has to be performed using new ideas.
Few selected numerical experiments are presented, also in connection with matrices that come from distributed-order FDE problems, and the adherence with the theoretical analysis is discussed together with open questions and future investigations.
\end{abstract}

\noindent\textbf{Keywords:}
Toeplitz sequences; algebra of matrix-sequences;
generating function; fractional operators.

\noindent\textbf{AMS SC:}
15B05; 15A18; 26A33

\section{Introduction}

In many practical applications it is required to solve numerically linear systems of Toeplitz kind and of (very) large dimensions and hence several specialized techniques of iterative type, such as preconditioned Krylov methods and ad hoc multigrid procedures have been designed; we refer the interested reader to the books \cite{ChJi2007,Ng2004} and to the references therein. We recall that such types of large Toeplitz linear systems emerge from specific applications involving e.g. the numerical solution of (integro-) differential equations and of problems with Markov chains. On the other hand, quite recently, new examples of real world problems have emerged and among them we can cite the modeling of anomalous diffusion processes, which naturally lead to the use of fractional differential equations (FDEs). Also in this case we encounter Toeplitz structures, which are dense since the fractional operators are inherently nonlocal and hence new numerical challenges have come into the play. In all the considered applications, the sizes of resulting matrices are large and iterative solvers have to be considered, both for numerical stability and computational issues. The convergence analysis of such iterative procedures can be performed when we know the spectral features of the considered coefficient matrices. This has been done in the recent literature for certain constant-order FDE problems (see, e.g., \cite{DoMa2016, DoMa2018}), by exploiting the well-established analysis of the spectral features of Toeplitz matrix-sequences generated by Lebesgue integrable functions and the more recent Generalized Locally Toeplitz theory \cite{GaSe2017}.

Here we consider a novel type of problem arising in the numerical approximation of distributed-order fractional operators. The latter can be interpreted as a parallel distribution of derivatives of fractional orders, whose most immediate application consists in the physical modeling of systems characterized by a superposition of different processes operating in parallel. As an example, we mention the application of fractional distributed-order operators as a tool for accounting memory effects in composite materials \cite{CaFa2017} or multi-scale effects \cite{CaGi2018}. For a detailed review on the topic we refer the reader to \cite{DiPa2021}. The procedure to numerically approximate distributed-order operators is made of two steps: 1) as the distributed-order operator consists of a continuous distribution of fractional order, a numerical integration is used to discretize the distributed-order operator into a multi-term constant-order fractional derivative; 2) following the conversion of the distributed-order operator into a multi-term fractional derivative, an approximation method has to be used for discretizing each constant-order fractional derivative composing the multi-term derivative.

In this paper, we focus on the case left open in \cite{MaSe2021} which arises when the integral partition width used in step 1) is asymptotic to the discretization step adopted in 2) and where the matrices under consideration take the form
\begin{equation}\label{eq_sum}
\mathcal{T}_{n}:=c_0T_{n}(f_0)+c_{1} h^h T_{n}(f_{1})+c_{2} h^{2h}T_{n}(f_{2})+\cdots+c_{n-1} h^{(n-1)h}T_{n}(f_{n-1}),
\end{equation}
where $T_{n}(f_j)$ is the Toeplitz matrix of size $n$ generated by $f_j$, the constants $c_0,c_{1},\ldots, c_{n-1}$ belong to the interval $[c_*,c^*]$, $c^*\ge c_*>0$, and are independent of $n$, $h:=\frac{1}{n}$, $f_j\ge0$ a.e. and given $g_j(\theta):=|\theta|^{2-jh}$, $j=0,\ldots,n-1$ there exist positive constants $d_*,d^*$ independent of $j$ and $n$ satisfying
\[
 d_* g_j \le f_j \le d^* g_j \quad\text{a.e.,} \quad j=0,\ldots,n-1,
\] 
(see e.g. \cite{FaLi2018,Hu2020,MaSe2021,Li2017} and the references therein).
Taking into consideration the linearity of the operators $T_{n}(\cdot)$, $n\ge 1$, since the resulting generating function
\[
F_{n}(\theta):=\sum_{j=0}^{n-1} c_j h^{jh} f_j(\theta)
\]
depends on $n$, the standard theory cannot be applied in a straightforward manner and the analysis has to be performed using new ideas. The description and the exploitation of such new ideas is the main topic of this note.
The organization of the paper is quite simple. In Section~\ref{sec:preliminaries} we briefly recall some properties of Toeplitz matrices and the linear positive operator $T_{n}(\cdot)$ that will be used in Section~\ref{sec:main} to provide asymptotic estimates for the extreme eigenvalues of $\mathcal{T}_{n}$ in~\eqref{eq_sum}. In Section~\ref{sec:num} we show and critically discuss few selected numerical examples for illustrating our theoretical findings. Finally, Section~\ref{sec:end} is devoted to conclusions and to state few open problems.

\section{A few preliminaries}
\label{sec:preliminaries}
Given a Lebesgue integrable function $f$ defined on $[-\pi,\pi]$, i.e. $f\in L^{1}([-\pi,\pi])$, and periodically extended to the whole real line, let us consider the Toeplitz matrix $T_{n}(f)$ of size $n$ generated by $f$. For any $n$, the entries of $T_{n}(f)$ are defined via the Fourier coefficients $\{\mathfrak{a}_k(f)\}_k$, $k\in\mathbb Z$, of $f$ in the sense that
\[
\left(T_{n}(f)\right)_{s,t}=\mathfrak{a}_{s-t}(f),\quad \quad s,t \in \{1,\ldots,n\},
\]
and
\[
\mathfrak{a}_k(f):=\frac{1}{2\pi} \int_{-\pi} ^{\pi}f(\theta) \e^{-\i k \theta} \dif\theta,\quad k=0,\pm1,\pm2,\dots,\quad\i^{2}=-1.
\]
The function $f$ is called the \emph{generating function or symbol} of $T_{n}(f)$. In the general case $f$ is complex-valued and $T_{n}(f)$ is non-Hermitian for all sufficiently large $n$. However if $f$ is real-valued a.e., then $T_{n}(f)$ is Hermitian for all $n$.
Furthermore, when $f$ is real-valued and even a.e., the matrix $T_{n}(f)$ is (real) symmetric for all $n$ \cite{GaSe2017}.

As it is well known in the literature the operator $T_{n}\colon L^{1}([-\pi,\pi]) \rightarrow \mathcal M_{n}({\mathbb C})$ is a linear positive operator, briefly LPO, in the sense that:
\begin{description}
\item[Linearity.] $T_{n}(\alpha f + \beta g)=\alpha T_{n}(f) +\beta T_{n}(g)$, for any constants $\alpha,\beta \in \mathbb C$, and any functions $f,g\in L^{1}([-\pi,\pi])$.
\item[Positivity.] $T_{n}(f)$ is Hermitian nonnegative definite for any size $n\ge 1$ if $f\ge 0$ a.e.
\end{description}
For more details on LPOs, we refer the reader to \cite{Se2000} and to the beautiful book \cite{Ko1960}. Here we just recall that the linearity is a trivial fact coming from the linearity of the Fourier coefficients with respect to its argument $f$, while the second property is less obvious. In reality, as proven in \cite{Se2000}, the positivity holds even in a stronger sense, because $f\ge 0$ a.e. and $f$ not identically zero a.e. imply that $T_{n}(f)$ is positive definite for any $n\ge 1$: conversely, $f\equiv 0$ a.e. means that $T_{n}(f)$ is trivially the null matrix of size $n\ge 1$. 

Furthermore, it is a simple check to show that linearity and positivity imply monotonicity and hence from $f\ge g$ a.e. we deduce that $T_{n}(f)-T_{n}(g)$ is nonnegative definite (respectively positive definite if $f-g$ is not identically zero a.e.), that is $T_{n}(f)\ge T_{n}(g)$ ($T_{n}(f) > T_{n}(g)$ if $f-g$ is not identically zero a.e.).

We conclude the current section by fixing the notation adopted throughout the paper. 
Given a square complex matrix $B:=[b_{s,t}]_{s,t=1}^{n}$, by $\|B\|_p$ we mean the matrix norm induced by the vector $p$-norm, i.e.,
\[
\|B\|_p\coloneqq\sup\bigg\{\frac{\|Bx\|_p}{\|x\|_p}\colon x\ne0\bigg\},
\]
and precisely $\|B\|_{1}=\max_s\sum_t|b_{s,t}|$ and $\|B\|_{2}=\sigma_{\max}(B)$, also known as spectral norm. We recall that when $B$ is Hermitian or symmetric we have $\|B\|_{2}=\rho(B)$, with $\rho(B)$ being the spectral radius of $B$ and, as a consequence, that $\|B\|_{2}\le\|B\|_{1}$. Finally, when considering an invertible matrix $B$, with $\mu_{2}(B)$ we refer to the spectral condition number, i.e.,
\[
\mu_{2}(B):=\|B^{-1}\|_{2}\|B\|_{2}=\frac{\sigma_{\max}(B)}{\sigma_{\min}(B)}.
\]

\section{Analysis of the problem}
\label{sec:main}

The set of properties introduced in Section~\ref{sec:preliminaries} allows one to deduce the following general result regarding the considered class of quite involved matrices (\ref{eq_sum}), arising in the numerical approximation of distributed-order FDEs (see \cite{Hu2020}).

\begin{theorem}
For $j=0,\ldots,n-1$, let $f_j$ be an essentially bounded function defined on $[-\pi,\pi]$ such that there exist positive constants $d_*, d^*>0$, independent of $j$ and of $n$, for which
\[
d_* g_j(\theta) \le f_j(\theta) \le d^* g_j(\theta) \ \mbox{a.e.}, \ \ \ g_j(\theta) := |\theta|^{2-jh}.
\]
Set
\begin{equation}\label{eq:tildeT}
\tilde{\mathcal{T}}_{n}:=c_0T_{n}(g_0)+c_{1} h^h T_{n}(g_{1})+c_{2} h^{2h}T_{n}(g_{2})+\cdots+c_{n-1} h^{(n-1)h}T_{n}(g_{n-1}),
\end{equation}
where the constants $c_0,c_{1},\ldots, c_{n-1}$ belong to the interval $[c_*,c^*]$, $c^*\ge c_*>0$, and are independent of $n$ and $h:=\frac{1}{n}$. Then $\mathcal{T}_{n}$ as in~\eqref{eq_sum} and $\tilde{\mathcal{T}}_{n}$ are both Hermitian positive definite and in addition
\[
d_*\tilde{\mathcal{T}}_{n} \le \mathcal{T}_{n} \le d^* \tilde{\mathcal{T}}_{n}.
\]
Consequently we have
\begin{eqnarray*}
d_*\lambda_{\min}(\tilde{\mathcal{T}}_{n}) & \le \lambda_{\min}(\mathcal{T}_{n}) \le & d^* \lambda_{\min}(\tilde{\mathcal{T}}_{n}), \\
d_*\lambda_{\max}(\tilde{\mathcal{T}}_{n}) & \le \lambda_{\max}(\mathcal{T}_{n}) \le & d^* \lambda_{\max}(\tilde{\mathcal{T}}_{n}), \\
\frac{d_*}{d^*} \cdot\mu_{2}(\tilde{\mathcal{T}}_{n}) & \le \mu_{2}(\mathcal{T}_{n}) \le & \frac{d^*}{d_*} \cdot \mu_{2}(\tilde{\mathcal{T}}_{n}).
\end{eqnarray*}
\end{theorem}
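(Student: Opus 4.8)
The plan is to exploit the three LPO facts recalled in Section~\ref{sec:preliminaries}: linearity, positivity, and the monotonicity they imply. First I would establish the operator inequalities $d_*\tilde{\mathcal{T}}_{n} \le \mathcal{T}_{n} \le d^* \tilde{\mathcal{T}}_{n}$ by working term by term. For each fixed $j$ the hypothesis gives $d_* g_j \le f_j \le d^* g_j$ a.e., hence by monotonicity of $T_{n}(\cdot)$ we get $d_* T_{n}(g_j) \le T_{n}(f_j) \le d^* T_{n}(g_j)$; here I use linearity to pull the scalar $d_*$ (resp. $d^*$) out of $T_{n}(d_* g_j) = d_* T_{n}(g_j)$. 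Since every coefficient $c_j h^{jh}$ is strictly positive, multiplying the $j$-th inequality by $c_j h^{jh}$ preserves the ordering in the Loewner sense, and summing over $j=0,\ldots,n-1$ — legitimate because the sum of nonnegative definite matrices is nonnegative definite — yields $d_*\tilde{\mathcal{T}}_{n} \le \mathcal{T}_{n} \le d^* \tilde{\mathcal{T}}_{n}$.

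Next I would verify Hermitian positive definiteness of both matrices. Each $g_j(\theta) = |\theta|^{2-jh}$ is real-valued, even, nonnegative a.e., and not identically zero, so by the strong form of positivity stated in the excerpt each $T_{n}(g_j)$ is Hermitian positive definite for every $n\ge 1$; a positive combination of such matrices is again Hermitian positive definite, so $\tilde{\mathcal{T}}_{n} > 0$. For $\mathcal{T}_{n}$ one can argue either from $d_* \tilde{\mathcal{T}}_{n} \le \mathcal{T}_{n}$ (a matrix dominating a positive definite one in the Loewner order is positive definite) or directly, noting that each $f_j \ge d_* g_j > 0$ on a set of positive measure so each $T_{n}(f_j)$ is positive definite and hence so is their positive combination. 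I would also remark that the $f_j$ being essentially bounded guarantees the Fourier coefficients and thus the matrices are well defined; this is where the essential-boundedness hypothesis is used.

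Finally I would derive the eigenvalue and condition-number consequences from the Loewner inequalities via the standard Rayleigh-quotient / min-max characterization. From $A \le B$ for Hermitian $A,B$ one has $\lambda_{\min}(A)\le\lambda_{\min}(B)$ and $\lambda_{\max}(A)\le\lambda_{\max}(B)$; applying this to $d_*\tilde{\mathcal{T}}_{n} \le \mathcal{T}_{n}$ and $\mathcal{T}_{n} \le d^* \tilde{\mathcal{T}}_{n}$, together with the scaling identities $\lambda_{\min}(d\tilde{\mathcal{T}}_{n}) = d\,\lambda_{\min}(\tilde{\mathcal{T}}_{n})$ and likewise for $\lambda_{\max}$, gives the first two chains of inequalities. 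For the condition number, since all matrices involved are Hermitian positive definite, $\mu_{2} = \lambda_{\max}/\lambda_{\min}$; combining the lower bound on $\lambda_{\max}(\mathcal{T}_{n})$ with the upper bound on $\lambda_{\min}(\mathcal{T}_{n})$ yields $\mu_{2}(\mathcal{T}_{n}) \ge \tfrac{d_*\lambda_{\max}(\tilde{\mathcal{T}}_{n})}{d^*\lambda_{\min}(\tilde{\mathcal{T}}_{n})} = \tfrac{d_*}{d^*}\mu_{2}(\tilde{\mathcal{T}}_{n})$, and symmetrically for the upper bound. This last part is entirely routine; honestly there is no serious obstacle anywhere — the only point requiring a little care is making sure the term-by-term monotonicity argument is applied with the correct (strictly positive) weights $c_j h^{jh}$ so that summation preserves the Loewner order, and observing that the $n$-dependence of the symbol $F_{n}$ is harmless here precisely because the bounds $d_*, d^*$ are uniform in $j$ and $n$.
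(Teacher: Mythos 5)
Your proposal is correct and follows essentially the same route as the paper: both rest on the LPO linearity/positivity/monotonicity facts to get the Loewner inequalities (the paper aggregates the symbols into $F_{n},\tilde F_{n}$ and applies monotonicity once, while you apply it term by term and sum, which is equivalent), and both deduce positive definiteness from the strong positivity of $T_{n}(\cdot)$. The only difference is that you spell out the min--max and condition-number derivations that the paper delegates to a citation, and your details there are accurate.
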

\begin{proof}
Taking into account that $f_j,g_j$ are nonnegative a.e. and not identically zero a.e, from the fact $T_{n}(\cdot)$ is LPO for any size $n$ and in the strong sense, we infer that $T_{n}(f_j)$ is Hermitian positive definite for $j=0,\ldots,n-1$, and that $T_{n}(g_j)$ is real symmetric positive definite, owing to the additional fact that $g_j$ is also even for any $j=0,\ldots,n-1$. Consequently, both $\mathcal{T}_{n}$ and $\tilde{\mathcal{T}}_{n}$ are positive definite, the first generically Hermitian and the second being also real symmetric, since they are linear combinations with positive coefficients of Hermitian positive definite matrices (real symmetric positive definite matrices in the second case).

Now, any linear combination with positive coefficients of LPOs is a new LPO so that the relationships
\[
F_{n}(\theta):=\sum_{j=0}^{n-1} c_j h^{jh} f_j(\theta),\qquad \tilde F_{n}(\theta):=\sum_{j=0}^{n-1} c_j h^{jh} g_j(\theta),
\]
$\mathcal{T}_{n}=T_{n}(F_{n})$, $\tilde{\mathcal{T}}_{n}=T_{n}(\tilde F_{n})$, $d_*\tilde F_{n}(\theta) \le F_{n}(\theta) \le d^* \tilde F_{n}(\theta)$ a.e. are implied.
As a consequence, taking into account the monotonicity, we deduce 
\begin{equation}\label{fund-ineq}
d_*\tilde{\mathcal{T}}_{n} \le \mathcal{T}_{n} \le d^* \tilde{\mathcal{T}}_{n},
\end{equation}
while the other four relations are directly implied by the latter one in (\ref{fund-ineq}), with derivation steps as done in \cite{Se1998}, in a slightly different setting.
With this the proof of the theorem is concluded.
\end{proof}

The previous reduction theorem allows one to shift the focus of the considered class of problems $\mathcal{T}_{n}$, to the simpler class of matrices $\tilde{\mathcal{T}}_{n}$, with reference to the specific generating function $\tilde F_{n}(\theta)$. 

For the very same reason, since the constants $c_0,c_{1},\ldots, c_{n-1}$ belong to the interval $[c_*,c^*]$, $c^*\ge c_*>0$, and are independent of $n$, with $h=\frac{1}{n}$, the asymptotic study can be simplified further and reduced to the analysis of $\hat{\mathcal{T}}_{n}$ with $\hat{\mathcal{T}}_{n}:=T_{n}(\hat F_{n})$, and
\begin{equation}\label{eq:hatF}
\hat F_{n}(\theta):=\sum_{j=0}^{n-1} h^{jh} g_j(\theta),
\end{equation}
in accordance to the following result, whose proof mimics the same steps of the previous one.

\begin{theorem}\label{th:reduction-bis}
For $j=0,\ldots,n-1$, let $c_j$ be positive numbers for which there exist positive constants $c^*, c_*$ independent of $n$ with
\[
0<c_* \le c_j \le c^*.
\]
Set
\[
\hat{\mathcal{T}}_{n} := T_{n}(g_0)+ h^h T_{n}(g_{1})+ h^{2h}T_{n}(g_{2})+\cdots+ h^{(n-1)h}T_{n}(g_{n-1}),
\]
Then $\hat{\mathcal{T}}_{n}$ and $\tilde{\mathcal{T}}_{n}$ as in~\eqref{eq:tildeT} are both Hermitian positive definite and in addition
\[
c_*\hat{\mathcal{T}}_{n} \le \tilde{\mathcal{T}}_{n} \le c^* \hat{\mathcal{T}}_{n}.
\]
Consequently we have
\begin{eqnarray*}
c_*\lambda_{\min}(\hat{\mathcal{T}}_{n}) & \le \lambda_{\min}(\tilde{\mathcal{T}}_{n}) \le & c^* \lambda_{\min}(\hat{\mathcal{T}}_{n}), \\
c_*\lambda_{\max}(\hat{\mathcal{T}}_{n}) & \le \lambda_{\max}(\tilde{\mathcal{T}}_{n}) \le & c^* \lambda_{\max}(\hat{\mathcal{T}}_{n}), \\
\frac{c_*}{c^*} \cdot\mu_{2}(\hat{\mathcal{T}}_{n}) & \le \mu_{2}(\tilde{\mathcal{T}}_{n}) \le & \frac{c^*}{c_*} \cdot \mu_{2}(\hat{\mathcal{T}}_{n}).
\end{eqnarray*}
\end{theorem}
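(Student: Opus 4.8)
The plan is to mimic almost verbatim the proof of the previous theorem, exploiting once more that $T_n(\cdot)$ is a linear positive operator in the strong sense. First I would recall that each $g_j(\theta)=|\theta|^{2-jh}$ is nonnegative a.e., not identically zero a.e., and even, so that $T_n(g_j)$ is real symmetric positive definite for every $j=0,\ldots,n-1$ and every $n$. Since $\hat{\mathcal{T}}_n$ and $\tilde{\mathcal{T}}_n$ are linear combinations of these matrices with strictly positive coefficients (the coefficients $h^{jh}$, respectively $c_j h^{jh}$, all lying in a fixed positive range), both are real symmetric positive definite; in particular they are Hermitian positive definite as claimed.

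Next I would invoke the fact that a nonnegative-coefficient linear combination of LPOs is again an LPO, so that writing $\hat F_n(\theta)=\sum_{j=0}^{n-1}h^{jh}g_j(\theta)$ and $\tilde F_n(\theta)=\sum_{j=0}^{n-1}c_j h^{jh}g_j(\theta)$ we have $\hat{\mathcal{T}}_n=T_n(\hat F_n)$ and $\tilde{\mathcal{T}}_n=T_n(\tilde F_n)$. From the hypothesis $0<c_*\le c_j\le c^*$ and the nonnegativity of each $h^{jh}g_j$, summing over $j$ yields the pointwise a.e. inequalities $c_*\hat F_n(\theta)\le \tilde F_n(\theta)\le c^*\hat F_n(\theta)$. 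Applying the monotonicity of $T_n(\cdot)$ (linearity plus positivity), this gives the matrix inequality
\[
c_*\hat{\mathcal{T}}_{n}\le\tilde{\mathcal{T}}_{n}\le c^*\hat{\mathcal{T}}_{n}.
\]

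Finally, from this two-sided operator inequality the three pairs of scalar estimates follow by the standard min–max (Courant–Fischer) argument, exactly as in the derivation steps referenced in \cite{Se1998}: for any Hermitian positive definite $A,B$ with $c_*B\le A\le c^*B$ one has $c_*\lambda_{\min}(B)\le\lambda_{\min}(A)\le c^*\lambda_{\min}(B)$ and the analogous bounds for $\lambda_{\max}$, while the condition-number bounds come from combining the extreme-eigenvalue estimates, using $\mu_2(A)=\lambda_{\max}(A)/\lambda_{\min}(A)$ for Hermitian positive definite $A$. Since nothing here depends on $n$ in a way that breaks the argument — the number of summands grows, but the operator identities and the uniform-in-$j$ bounds on the $c_j$ are all that is used — the proof goes through without obstacle; the only point requiring the slightest care is checking that the coefficients $h^{jh}$ are genuinely positive for all $j$ (true since $h=1/n>0$), so that strict positive definiteness, and not merely nonnegative definiteness, is preserved.
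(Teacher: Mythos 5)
Your proposal is correct and follows exactly the route the paper intends: the paper itself gives no separate proof of this theorem, stating only that it ``mimics the same steps of the previous one,'' and your argument is precisely that mimicry — positive definiteness via the strong LPO property of $T_n(\cdot)$ applied to the even, nonnegative, not-identically-zero symbols $g_j$, the identification $\hat{\mathcal{T}}_n=T_n(\hat F_n)$, $\tilde{\mathcal{T}}_n=T_n(\tilde F_n)$, the pointwise bound $c_*\hat F_n\le\tilde F_n\le c^*\hat F_n$ pushed through monotonicity, and the standard Courant--Fischer consequences for the extreme eigenvalues and the condition number. Nothing further is needed.
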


The rest of the section deals with the more specific case of the asymptotic spectral analysis of $\hat{\mathcal{T}}_{n}$. The next result shows asymptotic estimates for the extreme eigenvalues of $\hat{\mathcal{T}}_{n}$.
We aim at studying the asymptotic behavior of the extreme eigenvalues of $T_{n}(\hat F_{n})$ and in particular at proving that $\lambda_{\min}(T_{n}(\hat F_{n}))\sim h$. 
For reaching this goal we consider two preparatory lemmas.

\begin{lemma}\label{lm:Rnj}
For all $\alpha_j\in(0,1]$ and all natural $n$, we have
\[
T_{n}(|\theta|^{2-\alpha_j})h^{\alpha_j}\le T_{n}(|\theta|^{2})+R_{n,j},
\]
with $R_{n,j}$ a real matrix such that
\[
\rho(R_{n,j})=\|R_{n,j}\|_{2}\le\frac{\alpha_j}{3\pi n^{2}(3-\alpha_j)}.
\]
\end{lemma}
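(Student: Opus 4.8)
The plan is to compare $|\theta|^{2-\alpha_j}h^{\alpha_j}$ with $|\theta|^2$ pointwise on $[-\pi,\pi]$ and then invoke the monotonicity of $T_n(\cdot)$ proved in Section~\ref{sec:preliminaries}. The key elementary inequality is $|\theta|^{2-\alpha_j}h^{\alpha_j}\le |\theta|^2 + \varphi_j(\theta)$ a.e., where $\varphi_j$ is a small nonnegative (or at least integrable) correction whose Toeplitz matrix $R_{n,j}:=T_n(\varphi_j)$ has controllably small spectral norm. Concretely, write $|\theta|^{2-\alpha_j}h^{\alpha_j} - |\theta|^2 = |\theta|^{2}\bigl((h/|\theta|)^{\alpha_j}-1\bigr)$; this is positive exactly when $|\theta|<h$ and negative when $|\theta|>h$, so the natural choice is $\varphi_j(\theta):=|\theta|^{2}\bigl((h/|\theta|)^{\alpha_j}-1\bigr)^{+}$, supported on the tiny interval $|\theta|\le h=1/n$. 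On that interval $|\theta|^{2}((h/|\theta|)^{\alpha_j}-1)\le |\theta|^{2-\alpha_j}h^{\alpha_j}\le h^{2}$, but to get the stated bound one should integrate rather than take the sup.

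First I would record the pointwise bound and set $\varphi_j$ as above, so that $|\theta|^{2-\alpha_j}h^{\alpha_j}\le|\theta|^2+\varphi_j(\theta)$ holds for all $\theta$, hence by linearity and monotonicity of $T_n$ we get $T_n(|\theta|^{2-\alpha_j})h^{\alpha_j}\le T_n(|\theta|^2)+R_{n,j}$ with $R_{n,j}:=T_n(\varphi_j)$ real (since $\varphi_j$ is real and even) and symmetric. Second, I would bound $\|R_{n,j}\|_2\le\|R_{n,j}\|_1\le\sum_{k\in\mathbb Z}|\mathfrak a_k(\varphi_j)|$; but since $\varphi_j\ge0$ this last sum is awkward, so instead I would use the cleaner estimate that for a nonnegative symbol the spectral norm of its Toeplitz matrix is dominated by $\|\varphi_j\|_{L^\infty}$ — no, better: use $\|R_{n,j}\|_2\le \mathfrak a_0(\varphi_j)\cdot n$ is too lossy. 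The right move is $\|T_n(\varphi_j)\|_2 \le \|T_n(\varphi_j)\|_1 \le n\,\|\mathfrak a_\bullet(\varphi_j)\|_\infty\le n\,\mathfrak a_0(\varphi_j) = \frac{n}{2\pi}\int_{-\pi}^{\pi}\varphi_j$, and then compute
\[
\int_{-\pi}^{\pi}\varphi_j(\theta)\,\dif\theta = 2\int_0^{h}\theta^{2-\alpha_j}h^{\alpha_j}-\theta^{2}\,\dif\theta = 2h^{\alpha_j}\frac{h^{3-\alpha_j}}{3-\alpha_j}-\frac{2h^{3}}{3} = 2h^{3}\Bigl(\frac{1}{3-\alpha_j}-\frac13\Bigr)=\frac{2h^{3}\alpha_j}{3(3-\alpha_j)}.
\]
Multiplying by $\frac{n}{2\pi}=\frac{1}{2\pi h}$ gives exactly $\|R_{n,j}\|_2\le\frac{h^{2}\alpha_j}{3\pi(3-\alpha_j)}=\frac{\alpha_j}{3\pi n^{2}(3-\alpha_j)}$, as claimed; and $\rho(R_{n,j})=\|R_{n,j}\|_2$ because $R_{n,j}$ is symmetric.

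The only delicate point is justifying $\|T_n(\varphi_j)\|_1\le n\sup_k|\mathfrak a_k(\varphi_j)|$ and $|\mathfrak a_k(\varphi_j)|\le\mathfrak a_0(\varphi_j)$; the first is just the definition $\|B\|_1=\max_s\sum_t|b_{s,t}|$ applied to a Toeplitz matrix (each row has at most $n$ entries, each of modulus $\le\sup_k|\mathfrak a_k|$), and the second follows from $\varphi_j\ge0$ via $|\mathfrak a_k(\varphi_j)|\le\frac1{2\pi}\int|\varphi_j|=\mathfrak a_0(\varphi_j)$. So the main obstacle is not conceptual but bookkeeping: choosing the correction term $\varphi_j$ supported on $|\theta|\le h$ (so that the pointwise inequality genuinely holds) and recognizing that the crude norm bound $\tfrac{n}{2\pi}\int\varphi_j$ is already sharp enough to produce the stated constant. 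I would present the pointwise split, the monotonicity step, and the integral computation in that order.
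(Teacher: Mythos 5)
Your proposal is correct and follows essentially the same route as the paper: the same cut-off correction $\psi_{n,j}(\theta)=\chi_{[-h,h]}(\theta)\{|\theta|^{2-\alpha_j}h^{\alpha_j}-|\theta|^{2}\}$, the same monotonicity (LPO) step to get $R_{n,j}=T_n(\psi_{n,j})$, and the same norm chain $\|R_{n,j}\|_{2}\le\|R_{n,j}\|_{1}\le n\max_k|\mathfrak a_k(\psi_{n,j})|$ with the identical integral giving $\frac{\alpha_j}{3\pi n^{3}(3-\alpha_j)}$ per Fourier coefficient.
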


\begin{proof}
We first observe that $|\theta|^{2-\alpha_j}h^{\alpha_j}\le|\theta|^{2}$ if and only if $h\le|\theta|$, therefore
\[
|\theta|^{2-\alpha_j}h^{\alpha_j}\le|\theta|^{2}+\psi_{n,j}(\theta)
\]
where $\psi_{n,j}(\theta)\coloneqq\chi_{[-h,h]}(\theta)\big\{|\theta|^{2-\alpha_j}h^{\alpha_j}-|\theta|^{2}\big\}$. Since $T_{n}\colon L^1(-\pi,\pi)\to\mathcal{M}_{n}(\mathbb C)$ is a LPO we deduces that
\[
T_{n}(|\theta|^{2-\alpha_j})h^{\alpha_j}\le T_{n}(|\theta|^{2})+R_{n,j},
\]
where $R_{n,j}\coloneqq T_{n}(\psi_{n,j})$ which is a real and symmetric matrix since $\psi_{n,j}$ is a real-valued and even function.

Now we evaluate $\rho(R_{n,j})=\|R_{n,j}\|_{2}$ and for doing that we calculate the $k$-th Fourier coefficient of $\psi_{n,j}$, which we called $\mathfrak{a}_k(\psi_{n,j})$,
\begin{eqnarray*}
|\mathfrak{a}_k(\psi_{n,j})|=&&\bigg|\frac{1}{2\pi}\int_{-h}^{h}(|\theta|^{2-\alpha_j}h^{\alpha_j}-|\theta|^{2})\e^{-\i k\theta}\dif\theta\bigg|\\
\le&&\frac{1}{2\pi}\int_{-h}^{h}|\theta|^{2-\alpha_j}(h^{\alpha_j}-|\theta|^{\alpha_j})\dif\theta\\
=&&\frac{1}{\pi}\int_0^{h}(h^{\alpha_j}\theta^{2-\alpha_j}-\theta^{2})\dif\theta\\
=&&\frac{\alpha_j}{3\pi n^{3}(3-\alpha_j)}.
\end{eqnarray*}
Therefore,
\[
\rho(R_{n,j})=\|R_{n,j}\|_{2}\le\|R_{n,j}\|_{1}=\max_{s}\sum_{k=0}^{n-1}|\mathfrak a_{s-k}(\psi_{n,j})|\le\frac{\alpha_j}{3\pi n^{2}(3-\alpha_j)},
\]
finishing the proof.
\end{proof}

\begin{lemma}\label{lm:Mnq}
For an entire number $q\ge1$ let $M_{n,q}\coloneqq\frac{1}{q}\sum_{j=0}^{q-1}T_{n}(|\theta|^{2-\alpha_j})h^{\alpha_j}$ with $\alpha_0,\ldots,\alpha_{q-1}\in[0,1]$. Then
\[
M_{n,q}\le T_{n}(|\theta|^{2})+R_{n},
\]
where $R_{n}\coloneqq\sum_{j=0}^{q-1} R_{n,j}$ with $R_{n,j}$ as in Lemma~\ref{lm:Rnj}, and
\begin{eqnarray*}
\lambda_{\min}(M_{n,q})\le&&\lambda_{\min}(T_{n}(|\theta|^{2}))+\frac{h^{2}}{\pi q}\sum_{j=0}^{q-1}\frac{\alpha_j}{\alpha_j+1}\\
\sim&&h^{2}.
\end{eqnarray*}
\end{lemma}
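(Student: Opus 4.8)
The plan is to bootstrap Lemma~\ref{lm:Rnj} over the averaged sum. First I would invoke Lemma~\ref{lm:Rnj} termwise: for each $j=0,\ldots,q-1$ with $\alpha_j\in(0,1]$ we have $T_{n}(|\theta|^{2-\alpha_j})h^{\alpha_j}\le T_{n}(|\theta|^{2})+R_{n,j}$, and for the (possibly present) indices with $\alpha_j=0$ the inequality $T_{n}(|\theta|^{2-\alpha_j})h^{\alpha_j}=T_{n}(|\theta|^{2})$ holds trivially with $R_{n,j}=0$. Averaging these $q$ matrix inequalities (all coefficients $1/q>0$, so the order is preserved) immediately yields $M_{n,q}\le T_{n}(|\theta|^{2})+R_{n}$ with $R_{n}=\frac1q\sum_{j=0}^{q-1}R_{n,j}$; note the statement as written absorbs the $1/q$ into the definition of $R_n$, which is harmless since each $R_{n,j}$ already carries the averaging weight or not depending on the chosen normalization — I would just be careful to state $R_n\coloneqq\frac1q\sum R_{n,j}$ consistently.

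Next I would pass to eigenvalues. Since $M_{n,q}$ and $T_{n}(|\theta|^2)$ are real symmetric and $M_{n,q}\le T_{n}(|\theta|^2)+R_{n}$, Weyl's monotonicity/perturbation inequality gives
\[
\lambda_{\min}(M_{n,q})\le\lambda_{\min}\big(T_{n}(|\theta|^2)+R_{n}\big)\le\lambda_{\min}(T_{n}(|\theta|^2))+\lambda_{\max}(R_{n})\le\lambda_{\min}(T_{n}(|\theta|^2))+\rho(R_{n}).
\]
Then I would bound $\rho(R_{n})\le\frac1q\sum_{j=0}^{q-1}\rho(R_{n,j})\le\frac1q\sum_{j=0}^{q-1}\frac{\alpha_j}{3\pi n^2(3-\alpha_j)}$ by Lemma~\ref{lm:Rnj} and the triangle inequality for the spectral norm. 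The displayed claim has $\frac{h^2}{\pi q}\sum_{j}\frac{\alpha_j}{\alpha_j+1}$, so I would reconcile the two: since $h=1/n$ and, for $\alpha_j\in[0,1]$, $\frac{\alpha_j}{3(3-\alpha_j)}\le\frac{\alpha_j}{\alpha_j+1}$ (because $3-\alpha_j\ge\alpha_j+1$ exactly when $\alpha_j\le1$, and one also needs the constant $\frac13\le1$), the cruder but cleaner bound $\frac{h^2}{\pi q}\sum_j\frac{\alpha_j}{\alpha_j+1}$ dominates $\rho(R_n)$ — this is the only spot requiring a small elementary inequality check, and I would spell it out rather than wave at it.

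Finally, for the asymptotic conclusion $\lambda_{\min}(M_{n,q})\sim h^2$, I would combine two facts: the classical estimate $\lambda_{\min}(T_{n}(|\theta|^2))\sim h^2$ (i.e.\ $\lambda_{\min}(T_n(|\theta|^2))=c\,n^{-2}+o(n^{-2})$ for the positive constant $c$ coming from the simple zero of order $2$ of $|\theta|^2$ at the origin — a standard result for Toeplitz matrices generated by a nonnegative symbol with a zero of even order, which I may cite from the references already in the paper), and the bound just obtained showing the correction term is $O(h^2)$ with a constant at most $\frac{1}{\pi q}\sum_j\frac{\alpha_j}{\alpha_j+1}\le\frac1\pi$, hence uniformly bounded in $q$ and $n$. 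Together these give $\lambda_{\min}(M_{n,q})=O(h^2)$; a matching lower bound $\lambda_{\min}(M_{n,q})\ge\lambda_{\min}(T_{n}(|\theta|^2))\gtrsim h^2$ follows at once from positivity, since $M_{n,q}\ge T_n(|\theta|^2)$ is false in general but $M_{n,q}\ge \frac1q\,T_n(|\theta|^2)\cdot(\text{something})$ — here I should be more careful: the clean lower bound comes instead from $|\theta|^{2-\alpha_j}h^{\alpha_j}\ge c_0|\theta|^2$ on $[-\pi,\pi]$ only up to a constant, so I would argue $M_{n,q}\ge \mathrm{const}\cdot T_n(|\theta|^2)$ via a pointwise symbol inequality and monotonicity. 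The main obstacle I anticipate is precisely getting a clean, constant-uniform-in-$q$ two-sided comparison for the asymptotic statement; the upper bound is immediate from the lemma chain, but the lower bound needs the pointwise estimate $|\theta|^{2-\alpha}h^{\alpha}\ge C|\theta|^2$ valid uniformly for $\alpha\in[0,1]$ and $|\theta|\le\pi$, which does hold with $C$ depending only on $\pi$, and then monotonicity of $T_n(\cdot)$ closes the argument.
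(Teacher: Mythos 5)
Your derivation of the matrix inequality and of the eigenvalue upper bound is correct and follows essentially the paper's own route: the paper likewise applies Lemma~\ref{lm:Rnj} termwise, averages, and then bounds the Rayleigh quotient at the minimizing eigenvector $x_0$ of $T_{n}(|\theta|^{2})$, which yields the same estimate as your Weyl step $\lambda_{\min}(A+B)\le\lambda_{\min}(A)+\rho(B)$. You are also right to flag that the statement's $R_{n}=\sum_{j}R_{n,j}$ is inconsistent with the factor $\frac{1}{q}$ that the termwise averaging actually produces, and your reconciliation $\frac{\alpha}{3(3-\alpha)}\le\frac{\alpha}{\alpha+1}$ for $\alpha\in[0,1]$ is exactly the elementary check needed to match the displayed constant.

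The one genuine error is in your final paragraph. The pointwise inequality $|\theta|^{2-\alpha}h^{\alpha}\ge C|\theta|^{2}$ with $C$ depending only on $\pi$, uniformly in $\alpha\in[0,1]$ and in $n$, is false: writing $|\theta|^{2-\alpha}h^{\alpha}=|\theta|^{2}\big(h/|\theta|\big)^{\alpha}$, for $\alpha=1$ and $|\theta|=\pi$ the ratio is $h/\pi\to0$ as $n\to\infty$. Hence no $n$-independent comparison $M_{n,q}\ge C\,T_{n}(|\theta|^{2})$ follows this way (and in the intended application $q=n$, $\alpha_j=jh$, most summands are damped by factors $h^{\alpha_j}$ that really do vanish). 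Fortunately that lower bound is not what the lemma asserts: the ``$\sim h^{2}$'' attaches to the right-hand side of the displayed inequality, namely to $\lambda_{\min}(T_{n}(|\theta|^{2}))+\frac{h^{2}}{\pi q}\sum_{j}\frac{\alpha_j}{\alpha_j+1}$, which is $\sim h^{2}$ because $\lambda_{\min}(T_{n}(|\theta|^{2}))\sim h^{2}$ (classical) and the correction term lies between $0$ and $\frac{h^{2}}{2\pi}$. That is exactly how the paper concludes, and the subsequent theorem uses only this one-sided upper estimate. If you delete the attempted two-sided comparison, your argument is complete and coincides with the paper's.
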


\begin{proof}
Let $x_0\in\mathbb{R}^n$ be an eigenvector of $T_{n}(|\theta|^{2})$ related to $\lambda_0\coloneqq\lambda_{\min}(T_{n}(|\theta|^{2}))$, that is $T_{n}(|\theta|^{2})x_0=\lambda_0 x_0$. Assume also that $\|x_0\|_{2}=1$. From Lemma~\ref{lm:Rnj} we know that
\begin{eqnarray*}
M_{n,q}=&&\frac{1}{q}\sum_{j=0}^{q-1} T_{n}(|\theta|^{2-\alpha_j})h^{\alpha_j}\\
\le&&T_{n}(|\theta|^{2})+\frac{1}{q}\sum_{j=0}^{q-1} R_{n,j}\\
=&&T_{n}(|\theta|^{2})+\frac{1}{q}R_{n}.
\end{eqnarray*}
Hence we obtain
\begin{eqnarray*}
\lambda_{\min}(M_{n,q})=&&\min_{\|x\|_{2}=1} x^* M_{n,q} x \le x_0^* M_{n,q} x_0\\
\le&&x_0^* T_{n}(|\theta|^{2}) x_0+\frac{1}{q} x_0^* R_{n} x_0\\
=&&\lambda_0+\frac{1}{q}\rho(R_{n})\\
=&&\lambda_0+\frac{1}{q}\sum_{j=0}^{q-1}\|R_{n,j}\|_{2}\\
\le&&\lambda_0+\frac{\alpha_j}{3\pi (3-\alpha_j)}h^{2}.
\end{eqnarray*}
Finally, using that $\lambda_0\sim h^{2}$ we finish the proof.
\end{proof}

\begin{theorem}
Let $M_{n}\coloneqq hT_{n}(\hat F_{n})$. Then
\[
\lambda_{\min}(M_{n})\le\lambda_{\min}(T_{n}(|\theta|^{2}))+\frac{h^{3}}{\pi}\sum_{j=0}^{n-1}\frac{jh}{jh+1}\sim h^{2}.
\]
\end{theorem}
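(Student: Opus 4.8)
The plan is to recognize $M_n$ as the particular instance $M_{n,n}$ of the averaged matrix $M_{n,q}$ studied in Lemma~\ref{lm:Mnq}, so that almost all of the work is already done. By linearity of $T_n(\cdot)$ and the definition~\eqref{eq:hatF} of $\hat F_n$, together with $g_j(\theta)=|\theta|^{2-jh}$ and $h=1/n$,
\[
M_n=h\,T_n(\hat F_n)=h\sum_{j=0}^{n-1}h^{jh}\,T_n(|\theta|^{2-jh})=\frac{1}{n}\sum_{j=0}^{n-1}T_n(|\theta|^{2-jh})\,h^{jh},
\]
which is exactly $M_{n,q}$ with $q=n$ and $\alpha_j:=jh$, $j=0,\ldots,n-1$. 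Since $0\le jh\le(n-1)/n<1$, all the exponents lie in $[0,1]$, so the hypotheses of Lemma~\ref{lm:Mnq} are met; the endpoint value $\alpha_0=0$ causes no trouble, because then $\psi_{n,0}\equiv0$ and hence $R_{n,0}=0$ in Lemma~\ref{lm:Rnj} (alternatively one may start the sums at $j=1$).

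First I would invoke Lemma~\ref{lm:Mnq} with these parameters, which yields directly
\[
\lambda_{\min}(M_n)\le\lambda_{\min}(T_n(|\theta|^2))+\frac{h^2}{\pi n}\sum_{j=0}^{n-1}\frac{jh}{jh+1}=\lambda_{\min}(T_n(|\theta|^2))+\frac{h^3}{\pi}\sum_{j=0}^{n-1}\frac{jh}{jh+1},
\]
using $h^2/n=h^3$; this is precisely the claimed inequality, so it only remains to estimate the size of the right-hand side. Since $0\le\frac{jh}{jh+1}<1$ for every $j$, one has $\frac{h^3}{\pi}\sum_{j=0}^{n-1}\frac{jh}{jh+1}\le\frac{h^3 n}{\pi}=\frac{h^2}{\pi}=O(h^2)$, and more sharply $h\sum_{j=0}^{n-1}\frac{jh}{jh+1}$ is a Riemann sum converging to $\int_0^1\frac{t}{1+t}\dif t=1-\ln2\in(0,1)$, so this correction term is in fact $\sim\frac{1-\ln2}{\pi}\,h^2$. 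Combining this with the classical asymptotics $\lambda_{\min}(T_n(|\theta|^2))\sim h^2$ (already used in the proof of Lemma~\ref{lm:Mnq}, and a consequence of $|\theta|^2$ having a zero of exact order $2$ at $\theta=0$), one concludes that the whole upper bound is $\sim h^2$, which finishes the proof.

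Regarding difficulty: there is essentially no real obstacle here once Lemmas~\ref{lm:Rnj} and~\ref{lm:Mnq} are in hand. The only points that deserve a moment's care are the bookkeeping identifying $M_n$ with $M_{n,q}$ — in particular verifying the admissibility of the exponents $\alpha_j=jh$ and the identity $h^2/n=h^3$ — and the remark that the additional sum, being $h^2$ times a bounded Riemann sum, does not degrade the $h^2$ order already carried by $\lambda_{\min}(T_n(|\theta|^2))$.
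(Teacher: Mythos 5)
Your proof is correct and is exactly the paper's argument: the paper's entire proof consists of invoking Lemma~\ref{lm:Mnq} with $q=n$ and $\alpha_j=jh$. Your additional bookkeeping (the identity $h^2/n=h^3$, the harmless endpoint $\alpha_0=0$, and the Riemann-sum estimate showing the correction term is itself $O(h^2)$) just makes explicit what the paper leaves implicit.
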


\begin{proof}
It is sufficient to invoke the Lemma~\ref{lm:Mnq} with $q=n$ and $\alpha_j= jh$.
\end{proof}

\section{Numerical experiments}
\label{sec:num}

In this section we numerically check the theoretical findings provided in Section~\ref{sec:main} and provide an heuristic analysis. With this aim we first calculate the entries of the matrices $T_{n}(\hat F_{n})$ and $T_{n}(\eta)$ for $\eta(\theta):=\theta^{2}$. From~\eqref{eq:hatF}, the $k$-th Fourier coefficient of the function $\hat F_{n}(\theta)$ named $\mathfrak{a}_k(\hat F_{n})$ can be obtained as
\[
\mathfrak{a}_k(\hat F_{n})=\sum_{j=0}^{n-1}h^{jh}\mathfrak{a}_k(g_j),
\]
where $\mathfrak{a}_k(g_j)$ is the $k$-th Fourier coefficient of the function $g_j(\theta)=|\theta|^{2-jh}$. Additionally, $\mathfrak{a}_k(g_j)$ can be exactly calculated with
\begin{align*}
\mathfrak{a}_k(g_j)=&\frac{1}{\pi}\int_0^\pi \theta^{2-jh}\cos(k\theta)\dif\theta\\
=&\frac{\pi^{2-jh}}{3-jh}\ {}_{1}H_{2}\Big(\frac{1}{2}(3-jh);\frac{1}{2},\frac{1}{2}(5-jh);-\frac{k^{2}\pi^{2}}{4}\Big)
\end{align*}
where ${}_p H_q$ is the well-known Generalized Hypergeometric function which can be found as an internal function, e.g., in the \textit{Mathematica} software. Thus we can exactly calculate the entries of the Toeplitz matrix $T_{n}(\hat F_{n})$.

On the other hand, the Fourier coefficients of $\eta$ can be exactly calculated by
\[
\mathfrak{a}_k(\eta)=\begin{cases}\mfrac{2}{k^{2}}\cos(\pi k)+\mfrac{k^{2}\pi^{2}-2}{k^{3}\pi}\sin(\pi k), & k\ne0;\\[2mm]
\mfrac{\pi^{2}}{3}, & k=0.\end{cases}
\]
We now present an heuristic finding which is related to the Avram--Parter formula. Let $q\in\mathbb N$ be a sufficiently large number, and consider the related symbol $\hat F_{q}$, that is
\[\hat F_{q}(\theta)\coloneqq\sum_{j=0}^{q-1} \frac{|\theta|^{2-\frac{j}{q}}}{q^{\frac{1}{q}}}=\theta^{2}\frac{1-\frac{1}{q|\theta|}}{1-\big(\frac{1}{q|\theta|}\big)^{\frac{1}{q}}}.\]
According to \cite[Th.3.2]{BoBoGrMa2015b}, for every $x\in(0,1)$, the Avram--Parter formula shows that
\begin{equation}\label{eq:AbLim}
\lim_{n\to\infty}\lambda_{\lceil xn\rceil}(T_{n}(\hat F_{q}))=Q_{q}(x),
\end{equation}
where $Q_{q}\colon[0,\pi]\to\mathbb R$ is the quantile function related to $\hat F_{q}$ which in this case is given by $Q_{q}(x)\coloneqq \hat F_{q}(\pi x)$. Using the estimation
\[\Big\{1-\Big(\frac{1}{\pi jqh}\Big)^{\frac{1}{q}}\Big\}^{-1}=\frac{q}{\log(\pi jqh)}\Big\{1+O\Big(\frac{1}{q}\log(\pi jqh)\Big)\Big\}\quad\mbox{as}\quad q\to\infty,\]
a simple asymptotic calculation produces
\begin{eqnarray*}
Q_{q}(jh)&=&(\pi j h)^{2}\frac{1-\frac{1}{\pi j q h}}{1-\big(\frac{1}{\pi j q h}\big)^{\frac{1}{q}}}\\
&=&\frac{\pi jh(\pi jqh-1)}{\log(\pi jqh)}+O\Big((\pi jh)^{2}+\frac{\pi jh}{q}\Big).
\end{eqnarray*}
Abusing of the limit~\eqref{eq:AbLim}, let's take $q=n$ and $x=jh$ to obtain
\[\lambda_{j}(T_{n}(\hat F_{n}))\approx Q_{n}(jh)=\frac{\pi j(\pi j-1)}{\log(\pi j)}h+O\big((jh)^{2}\big),\]
which gives us the heuristic calculations
\begin{equation}\label{eq:Heu}
\lambda_{\min}(T_{n}(\hat F_{n}))\approx\frac{\pi(\pi-1)}{\log(\pi)}h+O(h^{2})\quad\mbox{and}\quad
\lambda_{\max}(T_{n}(\hat F_{n}))\approx 
\frac{\pi^{2}n}{\log(\pi n)}+O(1).
\end{equation}
Table~\ref{tb:SphatF} shows the extreme eigenvalues of $T_{n}(\hat F_{n})$ for different values of $n$. It agrees with the heuristic results in~\eqref{eq:Heu} and, in particular, shows that $\mu_{2}(T_{n}(\hat F_{n}))=O\big(\frac{(\pi n)^{2}}{\log(\pi n)}\big)$. Interestingly, the constant $\frac{\pi(\pi-1)}{\log(\pi)}\approx5.8774$ agrees with the row $\lambda^*_{\min}$ since we expect a very slow convergence in this cases.

\renewcommand{\arraystretch}{1.1}
\begin{table}[ht]
\centering
{\footnotesize\begin{tabular}{ccccccc}
\toprule
$n$ & 64 & 128 & 256 & 512 & 1024 &2048\\ \midrule
$\lambda_{\min}$ & $7.8737\tms10^{-2}$ & $3.9480\tms10^{-2}$ & $1.9768\tms10^{-2}$ & $9.8914\tms10^{-3}$ & $4.9476\tms10^{-3}$ & $2.4743\tms10^{-3}$\\
$\lambda^*_{\min}$ & $5.0392$ & $5.0534$ & $5.0607$ & $5.0644$ & $5.0663$ & $5.0673$\\
$\lambda_{\max}$ & $120.9373$ & $212.8457$ & $380.1275$ & $687.1094$ & $1\,254.2460$ & $2\,307.9670$\\
$\lambda^*_{\max}$ & $1.015$ & $1.010$ & $1.006$ & $1.004$ & $1.002$ & $1.001$\\
$\mu_{2}$ & $1\,535.9667$ & $5\,391.2724$ & $19\,229.1665$ & $69\,465.1987$ & $253\,507.4186$ & $932\,790.7960$ \\
$\mu^*_{2}$ & $0.2015$ & $0.1999$ & $0.1989$ & $0.1982$ & $0.1978$ & $0.1976$ \\ \bottomrule
\end{tabular}}
\caption{The extreme eigenvalues and the conditioning of $T_{n}(\hat F_{n})$ for different values of $n$. Here we used the notation $\lambda^*_{\min}=n\lambda_{\min}$, $\lambda^*_{\max}=\frac{\log(\pi n)}{\pi^{2} n}\lambda_{\max}$, and $\mu^*_{2}=\frac{\log(\pi n)}{(\pi n)^{2}}\mu_{2}$.}\label{tb:SphatF}
\end{table}

Having in mind that matrices in the form of $T_{n}(\hat F_{n})$ arise within distributed-order fractional problems which are typically ill-conditioned, we conclude this section providing a short discussion on the conditioning of $T_{n}^{-1}(\eta)T_{n}(\hat F_{n})$, i.e., we check how the spectrum changes when preconditioning with a Laplacian-like matrix. Table~\ref{tb:SpCond} shows the extreme eigenvalues of $T_{n}^{-1}(\eta)T_{n}(\hat F_{n})$ for different values of $n$
and it suggests that
\[
\lambda_{\min}(T_{n}^{-1}(\eta)T_{n}(\hat F_{n}))=O\Big(\frac{n}{\log n}\Big)\quad\mbox{and}\quad\lambda_{\max}(T_{n}^{-1}(\eta)T_{n}(\hat F_{n}))=O\Big(\frac{n^{2}}{\log(\pi n)}\Big).
\]
From this results we expect that $\mu_{2}(T_{n}^{-1}(\eta)T_{n}(\hat F_{n}))=O(n)$ which means that $\eta(\theta)$ is not acting as an effective preconditioning function. Figure~\ref{fg:SphatF} shows the eigenvalues of $T_{n}(\hat F_{n})$ and of $T_{n}^{-1}(\eta)T_{n}(\hat F_{n})$ for $n=1024$ and from them, it is immediately clear that the largest eigenvalue of $T_{n}(\hat F_{n})$ is magnified after preconditioning and that the preconditioned spectrum is far from being well-clustered.

\renewcommand{\arraystretch}{1.1}
\begin{table}[ht]
\centering
{\footnotesize\begin{tabular}{ccccccc}
\toprule
$n$ & 64 & 128 & 256 & 512 & 1024 & 2048\\ \midrule
$\lambda_{\min}$ & 15.4546 & 26.5447 & 46.4058 & 82.3378 & 147.9148 & 268.6040\\
$\lambda^{\dagger}_{\min}$ & 1.004 & 1.006 & 1.005 & 1.003 & 1.001 & 1.000\\
$\lambda_{\max}$ & 1\,793.0355 & 6\,479.2722 & 23\,557.6771 & 86\,165.4914 & 316\,954.9557 & 1\,175\,952.6713\\
$\lambda^{\dagger}_{\max}$ & 2.3217 & 2.3715 & 2.4048 & 2.4268 & 2.4412 & 2.4587\\
$\mu_{2}$ & 116.0198 & 244.0896 & 507.6456 & 1\,046.4872 & 2\,142.8207 & 4\,378.0162\\
$\mu^{\dagger}_{2}$ & 1.8128 &  1.9069 & 1.9830 & 2.0439 & 2.0926 & 2.1377\\ \bottomrule
\end{tabular}}
\caption{The extreme eigenvalues and the conditioning of $T_{n}^{-1}(\eta)T_{n}(\hat F_{n})$ where $\eta(\theta)=\theta^{2}$, for different values of $n$. Here we used the notation $\lambda^{\dagger}_{\min}=\frac{\log n}{n}\lambda_{\min}$, $\lambda^{\dagger}_{\max}=\frac{\log(\pi n)}{n^{2}}\lambda_{\max}$, and $\mu^{\dagger}_{2}=\frac{1}{n}\mu_{2}$.}\label{tb:SpCond}
\end{table}

\begin{figure}[ht]
\centering
\includegraphics[width=150mm]{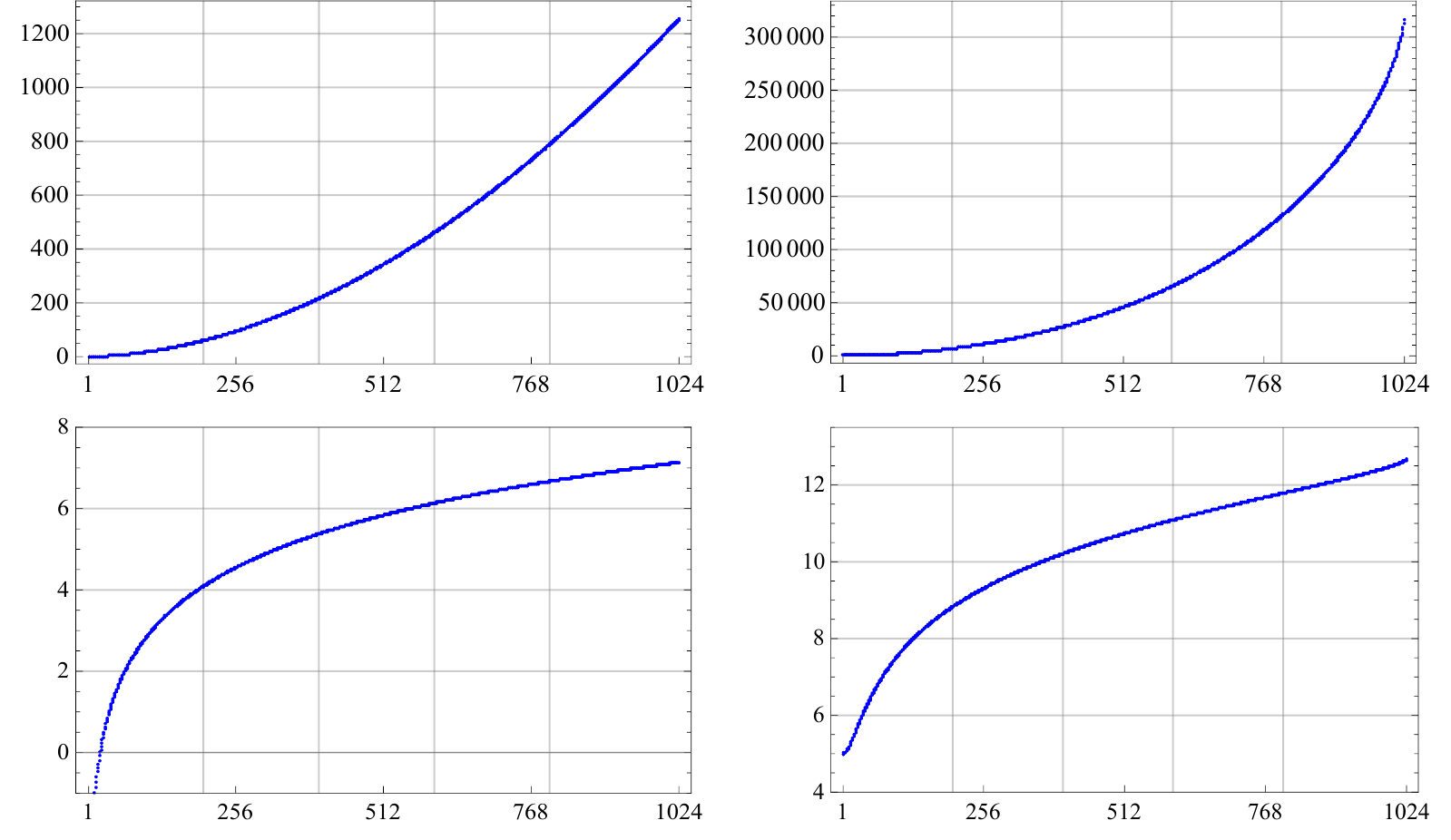}
\caption{The eigenvalues of $T_{n}(\hat F_{n})$ and of $T_{n}^{-1}(\eta)T_{n}(\hat F_{n})$ where $\eta(\theta)= \theta^{2}$, for $n=1024$. Regular scale (top) and logarithmic scale (bottom).}\label{fg:SphatF}
\end{figure}

\newpage
\section{Conclusions}
\label{sec:end}
In this note we have considered a novel type of spectral problem involving Toeplitz structures and arising in the numerical approximation of distributed-order fractional differential equations. The spectral study of the matrices under consideration has been reduced to the study of the matrix
\[
T_{n}(g_0)+ h^h T_{n}(g_{1})+ h^{2h}T_{n}(g_{2})+\cdots+ h^{(n-1)h}T_{n}(g_{n-1}),
\]
where $h=\frac{1}{n}$, $g_j(\theta)=|\theta|^{2-jh}$, and $j=0,\ldots,n-1$.
Because the resulting generating function depends on $n$, the standard theory cannot be applied and the analysis has been performed using new ideas. Few selected numerical experiments have been presented and critically discussed. Many open questions remain: we can consider for instance the use of the given spectral information in a context of Krylov preconditioning or multigrid design  for large linear systems coming from FDEs with distributed order and the extension of the proposed analysis in a multidimensional context, when the spatial derivatives belong to a $d$-dimensional domain with $d\ge 2$. These lines of research will be the subject of future investigations.

\section*{Acknowledgement}
The third and fourth authors are members of the INdAM research group GNCS. The work of the third author was partly supported by the GNCS-INdAM Young Researcher Project 2020 titled \lq\lq Numerical methods for image restoration and cultural heritage deterioration''.

\bibliographystyle{acm}
\bibliography{Toeplitz}

\end{document}